\NeedsTeXFormat{LaTeX2e}

\documentclass[a4paper,12pt]{amsart}
\usepackage[paper=a4paper,left=25mm,right=25mm,top=25mm,bottom=25mm]{geometry}


\usepackage{amscd}
\usepackage{bbm}
\usepackage{mathtools}
\usepackage{enumitem}

\allowdisplaybreaks[1]

\newtheorem{Theorem}{Theorem}[section]
\newtheorem{Lemma}[Theorem]{Lemma}

\newtheorem{Remark}[Theorem]{Remark}

\theoremstyle{definition}
\newtheorem{Definition}[Theorem]{Definition}

\usepackage{amssymb}

\newcommand\C{{\mathbb C}}

\newcommand\N{{\mathbb N}}

\newcommand\la{\lambda}

\newcommand\Ga{\Gamma}
\newcommand\ga{\gamma}

\newcommand\lu{\left\langle}
\newcommand\ru{\right\rangle}












\begin{document}
\thispagestyle{empty}
\title{Random zeros entire functions}

\author{\textbf{Yuri Kondratiev}\\
 Dragomanov University, Kyiv, Ukraine}
 
 \begin{abstract}

We propose the construction of entire functions
with a given random collection of zeros. There are
considered two particular cases. In the first one we are
dealing with simple zeros.  And the second corresponds 
to random zeros with random multiplicity.

 \emph{Keywords}: entire functions, zeros, Weierstrass products, Poisson random fields,
 marked configurations

\emph{AMS Subject Classification 2010}: 60J65, 47D07, 35R11, 60G52.

 \end{abstract}

\maketitle

\section{Introduction}

A classical problem in complex analysis consists in  a construction 
of an entire function with a prescribed set of zeros. For a finite set
of zeros the answer is given by a proper polynomial. More generally, 
let $a_1, a_2, \dots$ be a given sequence of complex numbers, non of which is zero,
with the point at infinity as the only limit point. We would like construct an entire function
whose set of zeros is precisely this sequence, see, e.g., \cite{Levin} for the detailed discussion.

\begin{Definition}

Put $E_0(z) = 1-z$, and for $p\in \N$
$$
E_p(z)= (1-z) \exp\{z+\frac{z^2}{2} + \dots + \frac{z^p}{p}\}.
$$
These functions are called elementary factors.

\end{Definition}

We form an infinite product
\begin{equation}
\label{prod}
\Pi(z)= \prod_{n=1}^\infty E_{p_n} (\frac{z}{a_n}).
\end{equation}
There we have chosen a sequence of natural numbers $p_n$ such that 
$$
\sum_{n=1}^\infty \frac{|z|^{p_n +1}}{|a_n|^{p_n+1}} < \infty.
$$
Such sequence always exists \cite{Levin}. 

Of course, the sequence $a_n, n\in \N$ is the set of zeros for
$\Pi(z)$.  Representation similar to (\ref{prod}) is possible for any entire
function that is stated by the Weierstrass theorem. If all $p_n=p\in \N, n\in\N$
then we are in the framework of the Hadamar theorem \cite{Levin}.

The aim of this short note is to show that the results above may be extended to
the case of random zeros. For the more transparent  presentation we consider 
only the case of Poisson random fields on $\C$. But the obtained results may be
stated for several other random point fields as, e.g., Gibbs point processes. 
Actually, as is may be seeing from the proofs below, only information we need
is the density of a random point process we use.

We will consider two particular cases. In the first one we are dealing with simple random zeros.
In the second are considered random zeros with random multiplicity.

\section{Poisson zeros }

We will use Poisson random fields over the complex plane $\C$. To this end we introduce the space $\Ga(\C)$ of all locally 
finite subsets $\ga\subset \C$ which are called configurations. For a detailed description of topological and metrical
properties of $\Ga(\C)$ see, e.g., \cite{AKR}, \cite{FKLO}, \cite{KKO}. Let $\sigma$ be a Radon measure on $\C$. In particular
we will use the case of the Lebesgue measure $m(dz)= dxdy,\;\; z=x+iy$. 

For $f\in C_0(\C)$ (the set of continuous fuctions with compact supports) 
introduce a lifted function on $\Ga(\C)$ by
$$
\lu f,\ga\ru =\sum_{x\in \ga} f(x), \;\; \ga \in \Ga(\C).
$$
The Poisson measure $\pi_\sigma$ on $\Ga(\C)$ is defined via the Laplace transform
$$
\int_{\Ga(\C)} e^{\lu f,\ga\ru} d\pi_\sigma (\ga)= \exp(\int_\C\{ e^{f(x)} -1)\}d\sigma(x)).
$$

As a corollary of this definition we may show for any
$f\in L^1(\C,\sigma)$
\begin{equation}
\label{linear}
\int \lu f,\ga\ru d\pi_\sigma = \int_\C f(z) d\sigma(z).
\end{equation}

For any $R>0$ denote $B_R=\{z\in\C \;|\; |z|\leq R \}$ and $B_R^c= \{z\in\C\;|\; |z|>R\}.$
Then $\C= B_R\cup B_R^c$ and 
$$
\Ga(\C) = \Ga(B_R) \times \Ga(B_R^c), \; \ga= \ga_R \cup \ga_R^c.
$$ 
Note that $\Ga(B_R)$ contains only finite configurations as $B_R$ is compact.
For the decomposition $\C= B_R\cup B_R^c$ we have the decomposition   of the measure
$$
\sigma= \sigma_R + \sigma_R^c
$$
and corresponding Poisson measures product 
$$
\pi_\sigma = \pi_{\sigma_R} \otimes \pi_{\sigma_R^c}.
$$

\begin{Lemma}[\cite{Rudin}]

\label{RUD}

For $|z|<1$ 

\begin{equation}
\label{bound}
|1-E_p(z)| \leq |z|^{p+1}.
\end{equation}
\end{Lemma}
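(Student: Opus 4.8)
The plan is to reduce the bound to a statement about power series with non-negative coefficients. The case $p=0$ is immediate, since $1-E_0(z)=z$ gives equality in (\ref{bound}). So assume $p\ge 1$. First I would compute the derivative of $E_p$: writing $S(z)=\sum_{k=1}^{p}z^k/k$, so that $S'(z)=1+z+\dots+z^{p-1}=(1-z^p)/(1-z)$, differentiation of the product $(1-z)e^{S(z)}$ and the identity $(1-z)S'(z)=1-z^p$ yield the clean formula
$$
E_p'(z) = -z^p \exp\Bigl\{\sum_{k=1}^{p}\frac{z^k}{k}\Bigr\},
$$
valid for all $z\in\C$ by analytic continuation from $z\neq 1$.

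Next I would read off Taylor coefficients. The function $\exp\{S(z)\}$ is the exponential of a power series with non-negative coefficients and zero constant term, hence is itself a power series $\sum_{j\ge 0}a_j z^j$ with $a_j\ge 0$ and $a_0=1$. Multiplying by $-z^p$ gives $-E_p'(z)=\sum_{m\ge p}a_{m-p}z^m$ with all coefficients non-negative. Since $E_p(0)=1$, integrating term by term produces
$$
1-E_p(z)=\sum_{m\ge p+1}c_m z^m,\qquad c_m=\frac{a_{m-1-p}}{m}\ge 0,
$$
so that $1-E_p(z)=z^{p+1}\varphi(z)$, where $\varphi(z)=\sum_{j\ge 0}c_{j+p+1}z^j$ also has non-negative coefficients. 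Both series are entire, so all rearrangements are legitimate on the closed unit disc.

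Finally I would fix the normalization by evaluating at $z=1$: since $E_p(1)=(1-1)e^{S(1)}=0$, we get $\varphi(1)=\sum_{j\ge 0}c_{j+p+1}=1-E_p(1)=1$. Hence for $|z|\le 1$,
$$
|1-E_p(z)| = |z|^{p+1}\,|\varphi(z)| \le |z|^{p+1}\sum_{j\ge 0}c_{j+p+1}|z|^j \le |z|^{p+1}\,\varphi(1) = |z|^{p+1},
$$
which is (\ref{bound}) (in fact on the closed disc, hence a fortiori for $|z|<1$). The only real point is the observation that the Taylor coefficients of $1-E_p$ are non-negative and sum to $1$; the derivative computation and the termwise integration are routine, and there is no analytic obstacle since $E_p$ is entire.
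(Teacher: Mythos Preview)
Your proof is correct. The paper does not give its own argument but cites Rudin, and what you have written is precisely Rudin's proof (Lemma~15.8 in \cite{Rudin}): compute $E_p'(z)=-z^p\exp\{S(z)\}$, observe that $1-E_p$ therefore has non-negative Taylor coefficients vanishing through order~$p$, and use $E_p(1)=0$ to see that these coefficients sum to~$1$.
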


\begin{Lemma}[\cite{Levin}]
\label{bound+}
For all $z\in \C$ 
\begin{equation}
\ln |E_p(z)|< A_p \frac{|z|^{p+1}}{1+|z|}.
\end{equation}

\end{Lemma}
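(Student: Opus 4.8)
The plan is to write $E_p(z)=(1-z)e^{P(z)}$ with $P(z)=\sum_{k=1}^{p}z^{k}/k$, so that $\ln|E_p(z)|=\ln|1-z|+\Re P(z)$, and then to split $\C$ into the two regions $|z|\le\frac12$ and $|z|>\frac12$, producing a constant valid on each and taking $A_p$ to be their maximum.

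On $|z|\le\frac12$ the point $1-z$ stays bounded away from $0$, so the principal logarithm is available and $\log E_p(z)=\log(1-z)+P(z)=-\sum_{k\ge p+1}z^{k}/k$. Taking real parts and then absolute values,
\[
\ln|E_p(z)|\le\bigl|\log E_p(z)\bigr|\le\sum_{k\ge p+1}\frac{|z|^{k}}{k}\le\frac{|z|^{p+1}}{p+1}\cdot\frac{1}{1-|z|}\le\frac{2}{p+1}\,|z|^{p+1},
\]
and since $1+|z|\le\frac32$ on this region, the right-hand side is at most $\frac{3}{p+1}\cdot\frac{|z|^{p+1}}{1+|z|}$, which is of the asserted form.

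On $|z|>\frac12$ I would instead use the crude bounds $\ln|1-z|\le\ln(1+|z|)$ and $\Re P(z)\le\sum_{k=1}^{p}|z|^{k}/k$, which reduces the claim to showing that the one–variable function
\[
h(r)=\frac{(1+r)\bigl(\ln(1+r)+\sum_{k=1}^{p}r^{k}/k\bigr)}{r^{p+1}}
\]
is bounded on $[\tfrac12,\infty)$. This is immediate: $h$ is continuous there, and as $r\to\infty$ the numerator is dominated by its top-degree term $r^{p+1}/p$ (using $p\ge1$), so $h(r)\to 1/p$; a continuous function on a half-line with a finite limit at infinity is bounded, say by $B_p$. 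Setting $A_p=\max\{3/(p+1),\,B_p\}$ then gives the bound on all of $\C$, since the two regions cover $\C$.

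The only real subtlety is the behaviour near $|z|=1$: there the series estimate used in the first region degenerates because $1-z$ may vanish, which is precisely why one passes to the softer second argument that discards the cancellation coming from the factor $e^{P(z)}$ and relies instead on continuity plus the growth asymptotics of $h$. (At $z=0$ both sides vanish, so the inequality is to be read as non-strict there and strict for $z\neq0$.)
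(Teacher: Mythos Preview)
The paper does not supply its own proof of this lemma; it is quoted from Levin's monograph without argument. Your sketch is correct and is in fact the standard route found there: one splits into small $|z|$, where the Taylor remainder $\log E_p(z)=-\sum_{k\ge p+1}z^k/k$ gives the $|z|^{p+1}$ bound directly, and large $|z|$, where one compares polynomial growth rates of numerator and denominator. The continuity-plus-limit argument you give for the boundedness of $h$ on $[\tfrac12,\infty)$ is a clean way to package the second half.

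One cosmetic point: your choice $A_p=\max\{3/(p+1),B_p\}$ only delivers the non-strict inequality $\le$, since $B_p$ is defined as a supremum that may be attained. If you want the strict $<$ for $z\neq 0$ as the lemma is stated, take any constant strictly larger than this maximum; for the applications in the paper the distinction is immaterial.
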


Our aim is the following: for fixed $p$ and a configuration $\ga\in \Ga(\C)$ we consider  a heuristic expression
$$
\Pi (z,\ga)= \prod_{x\in \ga, x\neq 0} E_p(\frac{z}{x}).
$$
Assume we take $z\in B_R$. We can decompose 
$$
\Pi (z,\ga)= \prod_{x\in \ga, x\neq 0} E_p(\frac{z}{x})= \prod_{x\in \ga_R, x\neq 0} E_p(\frac{z}{x}) \times
 \prod_{x\in \ga_R^c} E_p(\frac{z}{x})
 $$
 and the existence of this function depends only on the second (infinite product) factor.
 
 \begin{Theorem}
 For any $p\geq 2$ the function $\Pi_p(z,\ga)$ is defined for $\pi_m$-a.a. $\ga\in \Ga(\C)$ as
 an entire function of $z\in\C$.
 \end{Theorem}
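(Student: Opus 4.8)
The plan is to separate the one genuine issue — a $\pi_m$-almost sure summability statement about the far points of $\ga$ — from the otherwise standard Weierstrass product theory. First fix $R\ge 1$ and $z\in B_R$ and use the decomposition already displayed: $\Pi_p(z,\ga)$ is the product of a finite product over $\ga_R\setminus\{0\}$ and the infinite product over $\ga_R^c$. The finite factor is a finite product of the entire functions $E_p(\cdot/x)$, hence entire and harmless (and $\pi_m$-a.s.\ $0\notin\ga$ anyway, the single point $\{0\}$ having $m$-measure zero). So everything comes down to the infinite product $\prod_{x\in\ga_R^c}E_p(z/x)$.

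For $x\in\ga_R^c$ one has $|z|\le R<|x|$, hence $|z/x|<1$, so Lemma~\ref{RUD} applies and gives $|1-E_p(z/x)|\le|z/x|^{p+1}\le (R/|x|)^{p+1}$ — a bound uniform in $z\in B_R$. Therefore
\[
\sum_{x\in\ga_R^c}\bigl|1-E_p(z/x)\bigr|\ \le\ R^{p+1}\!\!\sum_{x\in\ga,\ |x|>R}\!\!|x|^{-(p+1)},
\]
and once the right-hand side is finite, the classical theorem on infinite products of holomorphic functions (\cite{Rudin}) shows that $\prod_{x\in\ga_R^c}E_p(z/x)$ converges uniformly on $B_R$ to a function holomorphic on $\{|z|<R\}$. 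Since $\C=\bigcup_{R\in\N}\{|z|<R\}$, this proves that $\Pi_p(\cdot,\ga)$ is entire.

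It thus remains to show that $S(\ga):=\sum_{x\in\ga,\ |x|>1}|x|^{-(p+1)}<\infty$ for $\pi_m$-a.a.\ $\ga$, because this single quantity dominates $\sum_{x\in\ga,\ |x|>R}|x|^{-(p+1)}$ for every $R\ge1$, so one full-measure set of configurations works simultaneously for all $R$. This is the heart of the matter, and it is exactly here that $p\ge2$ is needed. I would apply the first-moment identity (\ref{linear}) to $f(z)=|z|^{-(p+1)}\mathbbm 1_{\{|z|>1\}}$: in polar coordinates
\[
\int_\C f(z)\,dm(z)=2\pi\int_1^\infty r^{-(p+1)}\,r\,dr=2\pi\int_1^\infty r^{-p}\,dr,
\]
which is finite if and only if $p\ge2$. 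For such $p$ we get $f\in L^1(\C,m)$, so $\int S(\ga)\,d\pi_m(\ga)=\int_\C f\,dm<\infty$, and a non-negative random variable with finite mean is a.s.\ finite; this produces the desired full-measure set.

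A small point to address along the way is that $\Pi_p(z,\ga)$ depends on neither the chosen $R$ nor any ordering of $\ga$: for fixed $z$ the estimate above makes $\{1-E_p(z/x):x\in\ga,\ |x|>|z|\}$ summable, so $\prod_{x\in\ga,\ x\neq0}E_p(z/x)$ is absolutely (hence unconditionally) convergent, and the partial products arising from different $R$ all represent the same holomorphic function on the corresponding disc. I do not expect any real obstacle beyond the integrability computation that forces $p\ge2$; the remainder is the Weierstrass/Hadamard product theorem combined with the Campbell-type formula (\ref{linear}).
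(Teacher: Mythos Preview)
Your argument is correct and follows essentially the same route as the paper: bound $|1-E_p(z/x)|$ via Lemma~\ref{RUD}, control the resulting sum by the Campbell formula~(\ref{linear}) to obtain the $\int_R^\infty\rho^{-p}\,d\rho$ integral forcing $p\ge2$, and conclude with the infinite-product theorem from \cite{Rudin}. Your version is in fact a bit more careful than the paper's in isolating a single full-measure set that works for all $R$ and in tracking the uniformity in $z$.
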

 
 \begin{proof}
 
 We rewrite 
 $$
 \Pi_p^{R}(z,\ga)= \prod_{x\in\ga_R^c} E_p(\frac{z}{x}) = \prod_{x\in\ga_R^c} (1 - (1-E_p(\frac{z}{x}))
 $$ and the convergence of the infinite product is equivalent to the convergence of the sum
 $$
 \sum_{x\in\ga_R^c} (1-E_p(\frac{z}{x})).
 $$
 
 Now we will use relation (\ref{linear}) with $f(z)= (1-E_p(\frac{z}{x}))$. We have
 $$
 \int_{\Ga(B_R^c)}  \sum_{x\in\ga_R^c} (1-E_p(\frac{z}{x})) d\pi_{R}^c(\ga_R^c) =
 $$
 $$
 \int_{B_R^c}  (1-E_p(\frac{z}{x})) dm_{R^c}(z) \leq \int_{B_R^c}  \frac{|z|^{p+1}}{|x|^{p+1}} dm_{R^c} (x)=
 C|z|^{p+1} \int_R^\infty \frac{1}{\rho^p} d\rho
 $$
 where we used bound (\ref{bound}). For $p\geq 2$ last integral is finite that means
 the integrability of $ \sum_{x\in\ga_R^c} (1-E_p(\frac{z}{x}))$ and, therefore, this sum is
 finite  $\pi_{m_{R}^c}$ almost  surely. Then 
 $$
 \sum_{x\in\ga} (1-E_p(\frac{z}{x}))
 $$
 converges $\pi_m$ almost surely and uniformly w.r.t. $z\in B_R$.

 Using Theorem 15.6 from \cite{Rudin} we conclude that $E_p(z,\ga)$ is well defined for a.a. $\ga\in\Ga(\C)$ 
 and is holomorphic on $B_R$.  But it is true for all $R>0$ that means $E_p(z,\ga)$ is entire.
 
 \end{proof}

 \begin{Remark}
 An alternative proof of this theorem is based on the analysis of
 $$
 \log \Pi_p (z,\ga)= \sum_{x\in \ga, x\neq 0} \log E_p(\frac{z}{x}).
 $$
 Using bound (\ref{bound+}) we again arrive in the necessity to estimate the convergence of
 $$
 \sum_{x\in \ga, x\neq 0} \frac{1}{|x|^{p+1} }
 $$
 and then we proceed as above.
 \end{Remark}

 \begin{Remark}
 We have a straightforward generalization of this theorem to the case when
 instead of the Lebesgue measure $m$ we use another intensity measure
 $\sigma$ for the Poisson random field.
 \end{Remark}

\begin{Remark}
It clear that the function $\Pi_p(z,\ga)$ has $\ga$ at the set of simple zeros.
\end{Remark}

\section{Marked Poisson zeros}

Now we will consider the case of random zeros with random multiplicity.
Technically, this situation may be described by means of marked random point fields.

Let  us  introduce the marked configuration space $\Ga(\C,\N)$ with marks from 
$\N$.  For $x\in \C$ we consider a pair $(x,n_x)$ with $n_x\in \N$. 
The set of all such pairs $\hat \ga=\{(x,n_x)|x\in \ga\in \Ga(\C)\} $ we denote
$\Ga(\C,\N)$. Obviously, 
$$
\Ga(\C,\N) \subset \Ga(\C \times \N).
$$
For a given probability measure $\la$ on $\N$ consider the Poisson measure $\pi_{m\otimes\la}$ on $\Ga(\C\times \N)$
and denote $\pi_m^\la$ its restriction on $\Ga(\C,\N)$. It is possible because the last set has the full measure, see
\cite{KKS} for detailed study of marked random fields.

For a marked configuration $\hat \ga$ consider 
$$
\Pi_p(z,\hat \ga) = \prod_{x\in\ga, x\neq 0} \left( E_p(x, \frac{z}{x} \right )^{n_x}
$$
that takes into account the multiplicity $n_x$ for  each zero $x\in \ga$.

\begin{Theorem}
Assume that 
$$
\int_\N n d\la(n)= \sum_{n\in\N} n \la(\{n\}) <\infty.
$$
For each $p\geq 2$ and $\pi_m^\la$-a.a. $\hat\ga\in \Ga(\C,\N)$ 
we have that
$\Pi_p (z, \hat \ga)$  exists as an entire function
of $z\in\C$.

\end{Theorem}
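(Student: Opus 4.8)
The strategy is to reduce the marked case to the estimate already carried out in the unmarked case, using the fact that the marks only introduce the extra weight $n_x$ in front of each term. Fix $R>0$ and take $z\in B_R$. As before, we split the product over $\ga_R^c$ (the part of the configuration outside $B_R$) and over the finite set $\ga_R\setminus\{0\}$; the latter is a finite product and poses no convergence issue, so everything comes down to the infinite product $\prod_{x\in\ga_R^c}\bigl(E_p(z/x)\bigr)^{n_x}$. Taking logarithms, convergence of this product is controlled by the sum $\sum_{x\in\ga_R^c} n_x\log E_p(z/x)$, and by Lemma \ref{bound+} we have $|\log E_p(z/x)|\le A_p\,|z|^{p+1}/|x|^{p+1}$ for $|x|>R\ge|z|$. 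Hence it suffices to show that
\begin{equation}
\label{marksum}
\sum_{x\in\ga_R^c} \frac{n_x}{|x|^{p+1}} < \infty
\end{equation}
for $\pi_m^\la$-a.a.\ $\hat\ga$, with the sum uniform in $z\in B_R$ (the $z$-dependence having been factored out into $|z|^{p+1}\le R^{p+1}$).

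To establish \eqref{marksum} I would use the analogue of \eqref{linear} for the marked Poisson measure: since $\pi_m^\la$ is the restriction to $\Ga(\C,\N)$ of the Poisson measure $\pi_{m\otimes\la}$ on $\Ga(\C\times\N)$ with intensity $m\otimes\la$, the first-moment (Mecke/Campbell) identity gives, for any measurable $g\ge 0$ on $\C\times\N$,
$$
\int \sum_{(x,n)\in\hat\ga} g(x,n)\, d\pi_m^\la(\hat\ga) = \int_\C\int_\N g(x,n)\, d\la(n)\, dm(x).
$$
Applying this with $g(x,n)= \mathbbm{1}_{B_R^c}(x)\, n/|x|^{p+1}$ yields
$$
\int \sum_{x\in\ga_R^c}\frac{n_x}{|x|^{p+1}}\, d\pi_m^\la = \Bigl(\int_\N n\, d\la(n)\Bigr)\int_{B_R^c}\frac{dm(x)}{|x|^{p+1}} = \Bigl(\sum_{n\in\N} n\,\la(\{n\})\Bigr)\cdot C\int_R^\infty \rho^{-p}\, d\rho,
$$
which is finite precisely because $p\ge 2$ (so the radial integral converges at infinity) and because the first moment of $\la$ is finite by hypothesis. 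A nonnegative random variable with finite expectation is finite almost surely, so \eqref{marksum} holds $\pi_m^\la$-a.s.

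With \eqref{marksum} in hand, the sum $\sum_{x\in\ga_R^c} n_x\log E_p(z/x)$ converges absolutely and uniformly for $z\in B_R$, hence $\sum_{x\in\ga_R^c}\bigl(1-(E_p(z/x))^{n_x}\bigr)$ converges uniformly on $B_R$ as well (each $(E_p(z/x))^{n_x}\to 1$), so by Theorem~15.6 of \cite{Rudin} the product $\Pi_p^R(z,\hat\ga)$ defines a holomorphic function on $B_R$ for a.a.\ $\hat\ga$; multiplying by the finite product over $\ga_R\setminus\{0\}$ preserves holomorphy. Since this is valid for every $R>0$ and the exceptional null sets can be taken along a sequence $R_k\to\infty$, we conclude that $\Pi_p(z,\hat\ga)$ is entire for $\pi_m^\la$-a.a.\ $\hat\ga$. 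The only genuinely new point compared with the unmarked theorem is the appearance of the factor $\int_\N n\,d\la(n)$ in the first-moment computation; the main thing to get right is the justification that the Campbell-type formula applies to the restricted measure $\pi_m^\la$ on $\Ga(\C,\N)$, which is exactly the content of the construction recalled from \cite{KKS}, and the bookkeeping that the $n_x$-th power of an elementary factor still obeys a usable bound via Lemma~\ref{bound+}.
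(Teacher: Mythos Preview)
Your proposal is correct and follows essentially the same route as the paper: split at radius $R$, pass to logarithms, invoke the bound of Lemma~\ref{bound+}, and use the first-moment (Campbell) identity for the marked Poisson measure to obtain the factor $\int_\N n\,d\la(n)$ times the same radial integral $\int_R^\infty \rho^{-p}\,d\rho$ as in the unmarked case. If anything, your write-up is more careful than the paper's own proof---you isolate the nonnegative sum $\sum_{x\in\ga_R^c} n_x/|x|^{p+1}$ before integrating, and you handle the countable union of null sets along $R_k\to\infty$ explicitly.
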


\begin{proof}
Consider $z\in B_R, R>0$. For
$$
\Pi_p^R (z,\hat \ga)= \prod_{x\in\ga, |x|>R} E^{n_x}_p(z/x)
$$
holds
$$
\log \Pi_p^R (z,\hat \ga) =\sum_{x\in\ga, |x|>R} n_x \log E_p(z/x)
$$
and then
$$
\int \log \Pi_p^R (z,\hat \ga) d\pi_m^\la (\hat \ga) = \int \left(  \sum_{x\in\ga, |x|>R}  n_x  \log E_p(z/x)\right) d\pi_m^\la(\hat\ga)=
$$
$$
\int_\N \int_{|x|>R} nE_p(z/x) d\la(n)dm(x)\leq \int_\N  n d\la(n) \int_{|x|>R} \frac{|z|^{p+1}}{|x|^{p+1}} dm(x)=
$$
$$
C\int_{R}^\infty \frac{1}{\rho^p} d\rho < \infty.
$$
We have used the bound (\ref{bound+}).  Therefore, 
$$
\log \Pi_p (z,\hat \ga) =\sum_{x\in\ga} n_x \log E_p(z/x)
$$
is finite $\pi_m^\la$ almost surely and converges uniformly in $z\in B_R$. As above it gives the statement 
of the theorem.

\end{proof}

\vspace{5mm}
\section{Acknowledgment}

The financial support by the Ministry 
for Science and Education of Ukraine
through Project 0122U000048
is gratefully acknowledged.


\begin{thebibliography}{99}


\bibitem{AKR} Albeverio, S., Kondratiev, Y.G., R\"ockner, M.: Analysis and geometry on configuration spaces. J. Funct. Anal. {\bf 154}, 444--500 (1998) 



\bibitem{FKLO} Finkelshtein, Dmitri; Kondratiev, Yuri; Lytvynov, Eugene; Oliveira, M. J. Stirling operators in spatial combinatorics. J. Funct. Anal. 282, 109285 (2022).


\bibitem{Levin} Levin, B. Ja., Distribution of zeros of entire functions, AMS, 1980


\bibitem{KK} Kondratiev, Yuri G.; Kuna, Tobias Harmonic analysis on configuration space. I. General theory. Infin. Dimens. Anal. Quantum Probab. Relat. Top. 5 (2002), no. 2, 201--233.

\bibitem{KKO} Kondratiev, Yuri G.; Kuna, Tobias; Oliveira, M. J. On the relations between Poissonian white noise analysis and harmonic analysis on configuration spaces. J. Funct. Anal. 213 (2004), no. 1, 1--30

\bibitem{FKLO}  Finkelshtein, D., Kondratiev, Y., Lytvynov, E., Oliveira, M.J. Spatial combinatorics,  ArXiv 2007.01175v1, 2020

\bibitem{KKS} Kuna, Tobias; Kondratiev, Yuri G.; Da Silva, Jose L. Marked Gibbs measures via cluster expansion. Methods Funct. Anal. Topology 4 (1998), no. 4, 50--81

\bibitem{Rudin} Rudin, Walter, Real and complex analysis, McGraw Hill, 1974

\end{thebibliography}
\end{document}